\newtheorem{ozn}{Definition}
\newtheorem{thm}[ozn]{Theorem}
\begin{document}
\selectlanguage{english}

\title[Consistent estimation in Cox proportional hazards model]
{Consistent estimation in Cox proportional hazards model with measurement errors\\ and unbounded parameter set}

\author{Alexander Kukush}
\address{Taras Shevchenko National University of Kyiv, Volodymyrska st. 64, 01601, Kyiv, Ukraine.} 
\email{alexander\_kukush@univ.kiev.ua}
\author{Oksana Chernova}
\address{Taras Shevchenko National University of Kyiv, Volodymyrska st. 64, 01601, Kyiv, Ukraine.}
\email{chernovaoksan@gmail.com} 

\subjclass[2000]{Primary 62N02; Secondary 62N01}
\date{01/03/2017}
\keywords{asymptotically normal estimator, consistent estimator, Cox proportional hazards model, simultaneous estimation of baseline hazard rate and regression parameter}

\begin{abstract}
Cox proportional hazards model with measurement error is investigated. In Kukush et al. (2011) [Journal of Statistical Research {\bf 45}, 77--94] and Chimisov and Kukush (2014) [Modern Stochastics: Theory and Applications {\bf 1}, 13--32]
asympto\-tic properties of simultaneous estimator $\lambda_n(\cdot)$, $\beta_n$ were studied for baseline hazard rate $\lambda(\cdot)$ and regression parameter $\beta$, at that the parameter set $\Theta=\Theta_{\lambda}\times \Theta_{\beta}$ was assumed bounded.
In the present paper, the set $\Theta_{\lambda}$ is unbounded from above and  not separated away from $0$. We construct the estimator in two steps: first we derive a strongly consistent estimator and then modify it to provide its asymptotic normality.
\end{abstract}
\maketitle

\section{Introduction}
Consider the Cox proportional hazards model (Cox, 1972), 
where a lifetime $T$ has the following intensity function
\begin{equation}
\label{eqindm}
\lambda(t|X;\lambda,\beta)=\lambda(t)\exp(\beta^TX), \quad t\geq0.
\end{equation}
A covariate $X$ is a given random vector distributed in $\mathbb{R}^m$, $\beta$ is a parameter belonging to $\Theta_\beta\subset \mathbb{R}^m$, and $\lambda(\cdot) \in \Theta_\lambda\subset C[0,\tau]$ is a baseline hazard function.

We use common random censorship model: instead of  $T$, only a couple $(Y,\Delta)$ is available, where $Y:=\min \{T,C\}$ and  $\Delta:=I_{\{T\leq C\}}$ is the censorship indicator. The censor $C$ is distributed on  $[0,\tau]$. Its survival function $G_C(u)=1-F_C(u)$ is unknown, while we know $\tau$. The conditional pdf of T given X is
\begin{equation*} \label{pdf T}
f_T(t|X, \lambda, \beta)=\lambda(t | X;\lambda, \beta)\exp \left(-\int_0^t\lambda(t | X;\lambda, \beta) d s\right).
\end{equation*}


Throughout this paper an additive error model is considered, i.e., instead of $X$ a surrogate variable
$$W=X+U$$
is observed, where a random error $U$ has known moment generating function\\
 ${M_U(z):=\mathsf{E} e^{z^TU}}$. A couple $(T, X)$, censor $C,$ and measurement error $U$ are stochastically independent.

Consider independent copies of the model $(X_i, T_i, C_i, Y_i, \Delta_i, U_i, W_i),\ i=1,...,n$. Based on triples $(Y_i,\Delta_i,W_i),\; i=1,...,n$, we estimate true parameters $\beta_0$ and $\lambda_0(t)$, $t \in[0,\tau]$. Due to the suggestion of Augustin (2004)
we use the following objective function 
$$Q_n^{cor}(\lambda,\beta):=\frac{1}{n} \sum_{i=1}^{n} q(Y_i,\Delta_i,W_i;\lambda,\beta),$$
where
$$q(Y,\Delta,W;\lambda,\beta):=\Delta(\log\lambda(Y)+\beta^TW)-\frac{\exp(\beta^TW)}{M_U(\beta)}\int_0^Y \lambda(u)du.$$
The corrected estimator is defined as
\begin{equation}
\label{obfun}
(\hat{\lambda}_n,\hat{\beta}_n)=\arg \max_{(\lambda,\beta)\in \Theta}Q_n^{cor}(\lambda,\beta),
\end{equation}
where $\Theta:=\Theta_{\lambda}\times \Theta_{\beta}$. If the parameter sets $\Theta_{\lambda}$ and $\Theta_{\beta}$ are compact, then $\Theta$ is compact as well and the maximum in (\ref{obfun}) is attained.

The issue of estimating $\beta_0$ and cumulative hazard $\Lambda(t)=\int_0^t\lambda_0(s)ds$ has been extensively studied in
the literature in past decades:
in Andersen and Gill (1982) general ideas are presented based on partial likelihood;
model with measurement errors is considered in Gu and Kong(1999), where, based on Corrected Score method, consistent
and asymptotically normal estimators are constructed for regression parameter and cumulative hazard function;
Royston (2011) discusses some problems where the behavior of baseline hazard function $\lambda_0(\cdot)$ itself, rather than cumulative hazard, is needed.

Our model is presented in Augustin (2004), where the baseline hazard function is assumed to belong to a parameter space, while we consider $\lambda_0(\cdot)$ from a compact set of $C[0,\tau]$.

In \cite{KuBar} the consistency of estimator (\ref{obfun}) is proven for a bounded parameter set. In \cite{ChiKu} its asymptotic normality is presented. We remark that in \cite{KuBar} the authors write $\Theta_\lambda$ without a formal requirement that $\lambda(0)$ is bounded, though actually this assumption was used throughout the paper. We prove that this condition and the separation of $\lambda(\cdot)$ away from zero are too restrictive and can be omitted. 

The paper is organized as follows. In Section 2, we define an estimator under unbounded parameter set and prove its consistency. Additionally, we describe a numerical scheme for calculation of the estimator. In Section 3, we modify the estimator constructed in Section 2 to produce the asymptotically normal estimator, and Section 4 concludes.

\section{Consistent estimation on the first stage}

Impose conditions on the parameter sets. 
\begin{enumerate}
  \item[(i)] $K_\lambda\subset C[0,\tau]$ is the following closed convex set of nonnegative functions
$$K_\lambda:=\{~ f:[0,\tau]\to \mathbb{R} |\; f(t)\geq 0,\forall t \in [0,\tau]\; \text{and}$$$$\; |f(t)-f(s)|\leq L|t-s|,\forall t,s\in [0,\tau]~\},$$
where $L>0$ is a fixed constant.
  \item[(ii)] $\Theta_\beta \subset \mathbb{R}^m$ is a compact set.
\end{enumerate}

The following conditions (iii) -- (vi) are borrowed from \cite{KuBar}. 
\begin{enumerate}
\item[(iii)] $\mathsf{E} U=0$ and for some constant $\epsilon >0$,
\begin{equation*}
\mathsf{E} e^{D\| U \|}<\infty , \ \text{where} \ D:=\max_{\beta \in \Theta_\beta}\| \beta \| +\epsilon.
\end{equation*}
\item[(iv)] $\mathsf{E} e^{D \| X \|}< \infty$, where $D>0$ is defined in (iii).
\item[(v)] $\tau $ is the right endpoint of the distribution of $C$, that is\\
$\mathsf{P}(C>\tau)=0$ and for all $\epsilon >0$,
$\mathsf{P}(C>\tau-\epsilon)>0$.
\item[(vi)] The covariance matrix of random vector $X$ is positive definite.
\end{enumerate}

Denote
\begin{equation}\label{K}
    K=K_\lambda\times \Theta_\beta.
\end{equation}

If $\lambda(Y)=0$ then we put 
$$\Delta\cdot\log\lambda(Y)=\left\{ \begin{array}{rcl} 0& \mbox{if}\quad \Delta=0\;\\
-\infty &\mbox{if}\quad \Delta=1.
\end{array}\right.$$

\begin{ozn}\label{def1}
Fix a sequence $\{\varepsilon_n\}$ of positive numbers, with $\varepsilon_n\downarrow 0$, as $n\to \infty$. The corrected estimator $(\hat{\lambda}^{(1)}_n,\hat{\beta}^{(1)}_n)$ of $(\lambda,\beta)$ is a Borel measurable function of observations $(Y_i,\Delta_i,W_i)$, $i=1,...,n$, with values in $K$ and such that
\begin{equation}\label{def}
    Q_n^{cor}(\hat{\lambda}^{(1)}_n,\hat{\beta}^{(1)}_n)\geq \sup_{(\lambda,\beta)\in K}Q_n^{cor}(\lambda,\beta)-\varepsilon_n.
\end{equation}
\end{ozn}

The corrected estimator exists due to Pfanzagl (1969) (it is essential here that the supremum in (\ref{def}) is finite). 
Additionally, assume the following.
\begin{enumerate}
\item[(vii)]  True parameters $(\lambda_0,\beta_0)$ belong to  $K$, which is given in (\ref{K}), and moreover ${\lambda_0(t)>0}$, $t\in[0,\tau]$.
\end{enumerate}

\begin{ozn}
Let $A_n=A_n(\omega)$, $n=1,2,\ldots$, be a sequence of statements depending on an elementary event $\omega\in\Omega$. We say that $A_n$ holds \textit{eventually} if for almost all $\omega$ there exists $n_0=n_0(\omega)$ such that for all $n\geq n_0(\omega),$ $A_n$ holds true.
\end{ozn}

\begin{thm}\label{ther1}
Under conditions (i) to (vii) the estimator $(\hat{\lambda}^{(1)}_n,\hat{\beta}^{(1)}_n)$ is a strongly consistent estimator of the true parameters  $(\lambda_0,\beta_0)$, that is
\begin{equation*}
    \max_{t\in[0,\tau]} |\hat{\lambda}^{(1)}_n(t)-\lambda_0(t)|\to 0\quad \text{and} \quad \hat{\beta}^{(1)}_n \to \beta_0
\end{equation*}
almost surely, as $n\to \infty$.
\end{thm}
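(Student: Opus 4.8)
The plan is to follow the classical route for consistency of $M$-estimators — identify a deterministic limit objective, show the true parameter is its unique maximizer, and upgrade pointwise convergence to a uniform law of large numbers — but with two modifications dictated by the unboundedness of $K_\lambda$ and the absence of separation from zero. First I would introduce the limit functional $Q^{cor}(\lambda,\beta):=\mathsf{E}\,q(Y,\Delta,W;\lambda,\beta)$ and exploit the unbiasedness of Augustin's corrected score. Since $U$ is independent of $(T,C,X)$ and $M_U(\beta)=\mathsf{E}e^{\beta^TU}$, conditioning on $(T,C,X)$ yields
$$\mathsf{E}\!\left[\frac{\exp(\beta^TW)}{M_U(\beta)}\,\Big|\,T,C,X\right]=\exp(\beta^TX),\qquad \mathsf{E}[\Delta\,\beta^TW]=\mathsf{E}[\Delta\,\beta^TX],$$
so that $Q^{cor}$ reduces to the error-free Cox contrast $\mathsf{E}\bigl[\Delta(\log\lambda(Y)+\beta^TX)-\exp(\beta^TX)\int_0^Y\lambda(u)\,du\bigr]$. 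I would check, using (iii), (iv) and $\lambda_0>0$, that $Q^{cor}$ is finite at $(\lambda_0,\beta_0)$ and upper semicontinuous on $K$, possibly taking the value $-\infty$ where $\lambda$ vanishes on the support of $Y$.

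Second, for identification I would show $(\lambda_0,\beta_0)$ is the unique maximizer of $Q^{cor}$ over $K$. Writing the difference $Q^{cor}(\lambda_0,\beta_0)-Q^{cor}(\lambda,\beta)$ as an expected Kullback--Leibler divergence between the corresponding lifetime laws, it is nonnegative and vanishes only when $\lambda_0(t)e^{\beta_0^TX}=\lambda(t)e^{\beta^TX}$ for almost every $t$ in the support of $Y$ and almost every $X$. Condition (v) guarantees that the support of $Y$ fills $[0,\tau]$, while (vi), i.e.\ positive definiteness of $\cov X$, forces $\beta=\beta_0$; the Lipschitz continuity built into (i) then yields $\lambda\equiv\lambda_0$ on all of $[0,\tau]$.

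Third — and this is where the unboundedness must be handled — I would establish an a priori bound confining the estimator to a compact set. A nonnegative $L$-Lipschitz function with $M(\lambda):=\max_t\lambda(t)$ large satisfies $\lambda(t)\geq M(\lambda)-L\tau$, so the penalty $\frac{e^{\beta^TW}}{M_U(\beta)}\int_0^Y\lambda$ grows \emph{linearly} in $M(\lambda)$ whereas the reward $\Delta\log\lambda(Y)$ grows only \emph{logarithmically}. Averaging and using the strong law, uniformly over the compact $\Theta_\beta$, to bound $\frac1n\sum_i \frac{e^{\beta^TW_i}}{M_U(\beta)}Y_i$ below by a positive constant eventually, I would produce a deterministic radius $R$ such that, eventually, $\sup_{M(\lambda)>R}\sup_{\beta}Q_n^{cor}(\lambda,\beta)<Q_n^{cor}(\lambda_0,\beta_0)-\varepsilon_n$. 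Combined with the defining inequality (\ref{def}), this forces $M(\hat\lambda_n^{(1)})\leq R$ eventually, so the estimator lives in the set $K^R:=\{\lambda\in K_\lambda:\max_t\lambda(t)\leq R\}\times\Theta_\beta$, which is compact by Arzel\`a--Ascoli.

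Finally, on $K^R$ the non-separation from zero is handled gracefully by a \emph{one-sided} uniform law of large numbers. On $K^R$ the integrand $q$ is upper semicontinuous and bounded above by the integrable envelope $\Delta(\log R+D\|W\|)$, because the penalty term is nonnegative and $\mathsf{E}\|W\|<\infty$ by (iii)--(iv); the downward logarithmic singularity at $\lambda=0$ only decreases $q$ and is therefore harmless for an upper bound. By the uniform law of large numbers for upper semicontinuous integrands with integrable upper envelope on a compact set, $\limsup_n\sup_{C}Q_n^{cor}\leq\sup_{C}Q^{cor}$ for every closed $C\subseteq K^R$; applying this to $C=\{(\lambda,\beta)\in K^R:\|\lambda-\lambda_0\|_\infty+\|\beta-\beta_0\|\geq\delta\}$, where the supremum of $Q^{cor}$ is strictly below $Q^{cor}(\lambda_0,\beta_0)$ by the uniqueness of Step two, and combining with the ordinary strong law $Q_n^{cor}(\lambda_0,\beta_0)\to Q^{cor}(\lambda_0,\beta_0)$ and with (\ref{def}), I conclude that every subsequential limit of $(\hat\lambda_n^{(1)},\hat\beta_n^{(1)})$ in $K^R$ equals $(\lambda_0,\beta_0)$. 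Since $K^R_\lambda$ is compact in sup-norm, this gives $\max_t|\hat\lambda_n^{(1)}(t)-\lambda_0(t)|\to0$ and $\hat\beta_n^{(1)}\to\beta_0$ almost surely. The main obstacle I anticipate is Step three: pinning down the a priori radius $R$ so that it works uniformly in $\beta$ and almost surely, since unlike the bounded-parameter case no compactness of $K_\lambda$ is available a priori and the bound must be extracted purely from the linear-versus-logarithmic growth of the objective.
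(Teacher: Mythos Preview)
Your proposal is correct, and its overall architecture---a priori localization to a compact $K^R$, then an $M$-estimation argument on $K^R$---matches the paper. In particular your Step~3 is exactly the paper's Part~1: both exploit that a nonnegative $L$-Lipschitz $\lambda$ with large maximum is uniformly large, so the penalty term dominates linearly while the $\log\lambda(Y)$ term grows only logarithmically; the paper makes this explicit by lower-bounding $e^{\beta^TW}/M_U(\beta)$ by $e^{-D_1\|W\|}/\max_\beta M_U(\beta)$ and invoking the SLLN.

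Where you diverge is in Step~4. The paper does \emph{not} appeal to an abstract one-sided ULLN for upper semicontinuous integrands. Instead it takes a convergent subsequence $(\hat\lambda^{(1)}_{n'},\hat\beta^{(1)}_{n'})\to(\lambda_*,\beta_*)$ in $K^R$, splits $Q_{n'}^{cor}$ into the $\log$-part and the remainder, obtains genuine uniform convergence of the remainder by equicontinuity on $K^R$, and handles the $\log$-part by an $\varepsilon$-shift: since $\hat\lambda^{(1)}_{n'}\le\lambda_*+\varepsilon$ for large $n'$, the SLLN on the set $\{\lambda\ge\delta_k\}$ gives $\limsup_{n'}\frac{1}{n'}\sum\Delta_i\log\hat\lambda^{(1)}_{n'}(Y_i)\le\mathsf E[\Delta\log(\lambda_*(Y)+\varepsilon)]$, after which $\varepsilon\downarrow0$ is justified by dominated convergence on $\{\lambda_*(Y)>1/2\}$ and monotone convergence on its complement. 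The identification $q_\infty(\lambda_0,\beta_0)\ge q_\infty(\lambda_*,\beta_*)$ with equality only at the truth is simply quoted from \cite{KuBar}, whereas you sketch the KL argument behind it.

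Your route is cleaner and more conceptual: once one accepts the one-sided ULLN for u.s.c.\ integrands with integrable upper envelope (a Wald/Pfanzagl-type result), the singularity at $\lambda=0$ requires no special handling, and the conclusion follows in one stroke for every closed $C\subset K^R$ excluding the truth. The paper's route is more hands-on and self-contained---it never invokes a general u.s.c.\ ULLN, only ordinary SLLN on carefully chosen compact sets together with the $\varepsilon$-shift---so it avoids having to verify the hypotheses of an abstract theorem but at the cost of a more intricate argument.
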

\begin{proof}
For $R>0$ denote
\begin{equation*}
    K_{\lambda}^R=K_{\lambda}\cap\bar{B}(0,R) \quad \text{and} \quad K^R=K_{\lambda}^R\times\Theta_\beta,
\end{equation*}
where $\bar{B}(0,R)$ is  closed ball in $C[0,\tau]$ with center in the origin and radius $R$.

1. In the first part of the proof, we show that for large enough nonrandom $R>||\lambda_0||$, it holds \textit{eventually}
\begin{equation}\label{sup}
    \sup_{(\lambda,\beta)\in K^R}Q_n^{cor}(\lambda,\beta)>\sup_{(\lambda,\beta)\in K\setminus K^R}Q_n^{cor}(\lambda,\beta).
\end{equation}

For $\lambda\in K_{\lambda}$ the Lipschitz condition
implies \begin{equation}\label{Lip}
\lambda(0)-L\tau\leq \lambda(t) \leq \lambda(0)+L \tau,
\end{equation}
therefore,
$$q(Y_i,\Delta_i,W_i;\lambda,\beta)\leq \Delta_i\left(\ln(\lambda(0)+L\tau)+\beta^TW_i\right)-\frac{\exp(\beta^TW_i)Y_i}{M_U(\beta)}(\lambda(0)-L\tau).$$

Using the Lipschitz condition 
 for $\lambda\in K_{\lambda},$ one can show that if ${\lambda(t_1)>R}$ for some $t_1\in[0,\tau]$, then ${\lambda(t)>R-L\tau}$ for all $t\in[0,\tau]$. On the other side,  $\lambda(0)>R$ yields $\lambda(t)>R-L\tau$, $t\in[0,\tau]$.
Thus, supremum on the right hand side of (\ref{sup}) can be taken over the set ${\{\lambda\in K_{\lambda}:\lambda(0)> R\}\times \Theta_{\beta}}$.

Denote
$$D_1=\max\limits_{\beta \in \Theta_\beta}\| \beta \|.$$
We have
$$\sup_{(\lambda,\beta)\in K\setminus K^R} Q_n^{cor}(\lambda,\beta)\leq I_1+\sup_{\substack{\lambda\in K_{\lambda}:\\ \lambda(0)>R}}I_2+I_3,$$
with
$$I_1=-(R-L\tau)\frac{1}{n}\sum_{i:\Delta_i=0}\frac{\exp(-D_1||W_i||)Y_i}{\max\limits_{\beta \in \Theta_\beta}M_U(\beta)},$$
$$I_2=\ln(\lambda(0)+L\tau)\frac{1}{n}\sum_{i:\Delta_i=1}\Delta_i -(\lambda(0)+L\tau)\frac{1}{n}\sum_{i:\Delta_i=1}\frac{\exp(-D_1||W_i||)Y_i}{\max\limits_{\beta \in \Theta_\beta}M_U(\beta)},$$
$$I_3=\frac{1}{n}\sum_{i:\Delta_i=1} D_1||W_i||+2L\tau\frac{1}{n}\sum_{i:\Delta_i=1}\frac{\exp(-D_1||W_i||)Y_i}{\max\limits_{\beta \in \Theta_\beta}M_U(\beta)}.$$
By the strong law of large numbers (SLLN),
$$I_1\to -(R-L\tau)\frac{\mathsf{E} [~C\cdot I(\Delta=0) \exp{(-D_1||W||)}~]}{\max\limits_{\beta \in \Theta_\beta} M_U(\beta)}$$
a.s., as $n \to \infty$.
This means that \textit{eventually}
$$I_1\leq -(R-L\tau)D_2,$$
where $D_2>0$.

 Denote $$A_n=\frac{1}{n}\sum_{i=1}^n \Delta_i,~~  B_n=\frac{1}{n}\sum_{i=1}^n\frac{\exp(-D_1||W_i||)Y_i}{\max\limits_{\beta \in \Theta_\beta} M_U(\beta)}1_{\{\Delta_i=1\}}.$$
Since $A_n>0$ and $B_n>0$ \textit{eventually},  
for $\lambda(0)>R$ we get
$$I_2\leq\max_{z>0} (A_n\log z-zB_n)=A_n\left(\log\left(\frac{A_n}{B_n}\right)-1\right).$$
By the SLLN
$$A_n\to \mathsf{P}(\Delta=1)>0,\quad B_n\to \frac{\mathsf{E} [~T\cdot I(\Delta=1)\exp{(-D_1||W||)}~]}{\max\limits_{\beta \in \Theta_\beta}M_U(\beta)}>0$$
a.s., as  $n \to \infty$. Therefore, $I_2$ \textit{eventually} is bounded from above by some positive constant $D_3$.

Further, using the SLLN it can be shown that \textit{eventually} $I_3$ is also bounded from above by some positive constant $D_4$.
Thus,
\begin{equation*}\label{f}
\limsup_{n \to \infty}  \sup_{(\lambda,\beta)\in K\setminus K^R} Q_n^{cor}(\lambda,\beta)\leq -(R-L\tau)D_2+D_3+D_4.
\end{equation*}
Here $D_2$, $D_3,$ and $D_4$ do not depend on $\beta \in \Theta_\beta$.
Tend $R \to +\infty$ and obtain
\begin{equation*}
\limsup_{n \to \infty} \sup_{(\lambda,\beta)\in K\setminus K^R} Q_n^{cor}(\lambda,\beta) \to -\infty,~ \text{as}\ R \to +\infty.
\end{equation*}
This proves that the inequality (\ref{sup}) holds \textit{eventually} for large enough $R$.

Therefore, we may and do replace $K$ for $K^R$ in Definition 1. Thus, we assume that for all $n\geq1$,
\begin{equation}\label{def2}
    Q_n^{cor}(\hat{\lambda}^{(1)}_n,\hat{\beta}^{(1)}_n)\geq \sup_{(\lambda,\beta)\in K^R}Q_n^{cor}(\lambda,\beta)-\varepsilon_n
\end{equation}
and $(\hat{\lambda}^{(1)}_n,\hat{\beta}^{(1)}_n)\in K^R$. Notice that $K^R$ is a compact set in $C[0,\tau]$.

2. Since $R>||\lambda_0||$, we have $(\lambda_0,\beta_0)\in K^R$. Then (\ref{def2}) implies the inequality
\begin{equation}\label{def0}
    Q_n^{cor}(\hat{\lambda}^{(1)}_n,\hat{\beta}^{(1)}_n)\geq Q_n^{cor}(\lambda_0,\beta_0)-\varepsilon_n.
\end{equation}
We fix $\omega\in A\subset\Omega$, with $\mathsf{P}(A)=1$. Further, we will impose additional conditions on $A$. We want to show that at point $\omega$, $(\hat{\lambda}^{(1)}_n,\hat{\beta}^{(1)}_n)\to(\lambda_0,\beta_0)$.
We have
\begin{equation}\label{convq}
Q_n^{cor}(\lambda_0,\beta_0)\to q_{\infty}(\lambda_0,\beta_0):=\mathsf{E} [q(Y,\Delta,W;\lambda_0,\beta_0)].
\end{equation}
 This holds almost surely, so we can assume (\ref{convq}) for the fixed $\omega$. Therefore, the first condition is
 $$Q_n^{cor}(\lambda_0,\beta_0;\omega)\to q_{\infty}(\lambda_0,\beta_0),\quad \omega \in A.$$
 The sequence $\{(\hat{\lambda}^{(1)}_n(\omega),\hat{\beta}^{(1)}_n(\omega)),n\geq1\}$ belongs to the compact set $K^R$. Consider an arbitrary convergent subsequence
\begin{equation}\label{subs}
    (\hat{\lambda}^{(1)}_{n'}(\omega),\hat{\beta}^{(1)}_{n'}(\omega))\to(\lambda_*,\beta_*) \in K^R.
\end{equation}
Then (\ref{def0}), (\ref{convq}) imply that
$$q_{\infty}(\lambda_0,\beta_0)\leq \liminf_{n'\to \infty} Q_{n'}^{cor}(\hat{\lambda}^{(1)}_{n'},\hat{\beta}^{(1)}_{n'})=$$
$$=\liminf_{n'\to \infty} \frac{1}{n'}\sum_{i=1}^{n'}\Delta_i\log\hat{\lambda}^{(1)}_{n'}(Y_i)+$$$$+\lim_{n'\to\infty}\frac{1}{n'}\sum_{i=1}^{n'}\left(\Delta_i\hat{\beta}^{(1)T}_{n'}W_i-
\frac{\exp(\hat{\beta}^{(1)T}_{n'}W_i)}{M_U(\hat{\beta}^{(1)}_{n'})}\int_0^{Y_i} \hat{\lambda}^{(1)}_{n'}(u)du\right).$$

The next assumption on $A$ is as follows: for all $\omega\in A,$ a  sequence of random functions
$$\frac{1}{n}\sum_{i=1}^n\left(\Delta_i\beta^TW_i-\frac{\exp(\beta^TW_i)}{M_U(\beta)}\int_0^{Y_i} \lambda(u)du\right)$$
converges uniformly in $(\lambda,\beta)\in K^R$ to
$$\mathsf{E} \left[\Delta\beta^TW-\frac{\exp(\beta^TW)}{M_U(\beta)}\int_0^Y \lambda(u)du\right]=:q_{\infty}^2(\lambda,\beta).$$
Such condition can be imposed because for any fixed $(\lambda,\beta)\in K^R~$ the latter sequence converges to $q^2_{\infty}$ a.s.,  the sequence is equicontinuous a.s. on the compact set $K^R,$ and the limit function is continuous on $K^R.$ These three statements ensure that the sequence converges to $q^2_{\infty}$ uniformly on $K^R$.

The function $q^2_{\infty}$ is continuous in $(\lambda,\beta)\in K^R$, thus,
$$q_{\infty}(\lambda_0,\beta_0)\leq \liminf_{n'\to \infty} \frac{1}{n'}\sum_{i=1}^{n'}\Delta_i\ln\hat{\lambda}^{(1)}_{n'}(Y_i)+q_{\infty}^2(\lambda_*,\beta_*).$$

For large $n'$, it holds
\begin{equation*}
    \hat{\lambda}^{(1)}_{n'}(t)\leq \lambda_*(t)+\varepsilon, \;t\in[0,\tau],
\end{equation*}
with fixed $\varepsilon>0.$
We demand also that
$$\frac{1}{n}\sum_{i=1}^n \Delta_i\ln \lambda(Y_i)\to \mathsf{E} [\Delta \lambda(Y)]$$
uniformly in  $(\lambda,\beta)\in \left(K^{R+\delta_k}_\lambda\cap\{\lambda:\lambda(t)\geq \delta_k\}\right)\times \Theta_\beta$, for each $k\geq1$ and $\omega\in A$, where $\delta_k\downarrow0$, $\{\delta_k\}$ is a fixed sequence of positive numbers.
Then by the SLLN
$$\liminf_{n'\to \infty} \frac{1}{n'}\sum_{i=1}^{n'}\Delta_i\log\hat{\lambda}^{(1)}_{n'}(Y_i)\leq \liminf_{n'\to \infty} \frac{1}{n'}\sum_{i=1}^{n'}\Delta_i\log(\lambda_*(Y_i)+\varepsilon)=$$$$=\mathsf{E} [\Delta\cdot\log(\lambda_*(Y)+\varepsilon)]=:q_{\infty}^{1,\varepsilon}(\lambda_*).$$
Hence, for each $\varepsilon>0$,
\begin{equation*}
    q_{\infty}(\lambda_0,\beta_0)\leq q_{\infty}^{1,\varepsilon}(\lambda_*)+q_{\infty}^2(\lambda_*,\beta_*).
\end{equation*}
Now, tend $\varepsilon\to0$. We have
$$q_{\infty}^{1,\varepsilon}(\lambda_*)=\mathsf{E} [~\Delta\cdot\log(\lambda_*(Y)+\varepsilon)I(\lambda_*(Y)>\frac{1}{2})~]+
\mathsf{E} [~\Delta\cdot\log(\lambda_*(Y)+\varepsilon)I(\lambda_*(Y)\leq\frac{1}{2})~].$$
The first expectation tends to
$$\mathsf{E} [~\Delta\cdot\log(\lambda_*(Y))I(\lambda_*(Y)>\frac{1}{2})~]$$
 by the Lebesgue dominance convergence theorem, and
the second expectation tends to $$\mathsf{E} [~\Delta\cdot\log(\lambda_*(Y))I(\lambda_*(Y)\leq\frac{1}{2})~]$$
by the Lebesgue monotone convergence theorem. Then
$$q_{\infty}^{1,\varepsilon}(\lambda_*,\beta_*)\to q_{\infty}^{1}(\lambda_*,\beta_*):=\mathsf{E} [~\Delta\cdot\log\lambda_*(Y)~],$$
as $\varepsilon\to0$. Therefore,
$$q_{\infty}(\lambda_0,\beta_0)\leq q_{\infty}^{1}(\lambda_*)+q_{\infty}^{2}(\lambda_*,\beta_*)=q_{\infty}(\lambda_*,\beta_*).$$
But according to \cite{KuBar} the inequality
\begin{equation*}
    q_{\infty}(\lambda_0,\beta_0)\geq q_{\infty}(\lambda_*,\beta_*)
\end{equation*}
holds true, moreover the equality is attained if, and only if, $\lambda_*=\lambda_0$ and $\beta_*=\beta_0$. Therefore, a convergent subsequence (\ref{subs}) converges exactly to $(\lambda_0,\beta_0)$. Since the whole sequence belongs to a compact set, this implies the convergence
$$(\hat{\lambda}^{(1)}_n(\omega),\hat{\beta}^{(1)}_n(\omega))\to (\lambda_0,\beta_0),\quad\text{as}\quad n\to \infty.$$
This holds for almost every $\omega\in \Omega$, and the strong consistency is proven.\\
\end{proof}

Now, we explain how the estimator can be computed. Like in \cite{ChiKu} we prove that for a fixed $\beta\in \Theta_\beta,$ the function $\hat{\lambda}^{(1)}_n$ that maximizes $Q_n^{cor}$ is a linear spline. 
\begin{thm}
Under conditions (i) and (ii) the function $\hat{\lambda}^{(1)}_n,$ which maximizes $Q_n^{cor},$ is a linear spline.
\end{thm}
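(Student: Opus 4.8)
The plan is to fix $\beta\in\Theta_\beta$ and analyze the dependence of $Q_n^{cor}(\lambda,\beta)$ on $\lambda\in K_\lambda$ alone. Looking at the objective function, the only places $\lambda$ enters are through the terms $\Delta_i\log\lambda(Y_i)$ and $\int_0^{Y_i}\lambda(u)\,du$. The first depends on $\lambda$ only through its values at the finitely many observed points $\{Y_i:\Delta_i=1\}$, while the second depends on $\lambda$ through its integral up to each $Y_i$. The key structural observation is that neither term ``sees'' the fine shape of $\lambda$ between consecutive knots, except insofar as it affects these integrals and the pointwise values at the $Y_i$. So the optimization over the infinite-dimensional set $K_\lambda$ should reduce to a finite-dimensional problem, and among all Lipschitz functions achieving a given configuration of values and integrals, the linear spline should be optimal (or at least achievable).

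First I would set $t_0=0$ and let $0<t_{(1)}<\dots<t_{(N)}\leq\tau$ be the distinct ordered values among the uncensored observation times $\{Y_i:\Delta_i=1\}$, together with the endpoint $\tau$, forming a finite set of candidate knots. Given an optimal (or near-optimal) maximizer $\lambda^*$, I would construct the linear spline $\tilde\lambda$ interpolating the values $\lambda^*(t_{(j)})$ at these knots (and extended appropriately on $[0,t_{(1)}]$ and beyond the last knot, respecting the nonnegativity and Lipschitz constraints). I would then argue two things: that $\tilde\lambda$ still lies in $K_\lambda$, and that $Q_n^{cor}(\tilde\lambda,\beta)\geq Q_n^{cor}(\lambda^*,\beta)$. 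The log-term is unchanged since $\tilde\lambda(Y_i)=\lambda^*(Y_i)$ at all uncensored $Y_i$ by construction. For the integral term, which enters with a minus sign, I would need $\int_0^{Y_i}\tilde\lambda\leq\int_0^{Y_i}\lambda^*$, i.e. the linear interpolant has no larger integral up to each knot; this is where the structure of $K_\lambda$ as the Lipschitz cone must be exploited.

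The key verification is that $\tilde\lambda\in K_\lambda$: the linear spline through points of a Lipschitz function is itself Lipschitz with the same constant, since on each subinterval its slope equals a secant slope of $\lambda^*$, and secant slopes are bounded by $L$; nonnegativity is inherited because linear interpolation of nonnegative values stays nonnegative. For the integral inequality, I would argue that among all admissible $\lambda\in K_\lambda$ sharing the prescribed knot values, one can always decrease (or not increase) each partial integral $\int_0^{t_{(j)}}\lambda$ by replacing $\lambda$ with its piecewise-linear interpolant, because between two fixed endpoint values the straight-line segment minimizes the area under a concave envelope argument — or more carefully, because any deviation from linearity that keeps the endpoints fixed and stays Lipschitz can be shown not to reduce the objective when one also accounts for the constraint at later knots. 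This monotonicity of the partial integrals under linearization is the delicate point.

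The main obstacle I anticipate is precisely handling the integral term on the final interval $[t_{(N)},\tau]$ and on the initial interval before the first uncensored time, where $\lambda$ is unconstrained by any log-term: here one wants $\lambda$ pushed down to minimize the integral penalty, which near the boundary and under the Lipschitz and nonnegativity constraints forces a specific piecewise-linear boundary behavior rather than free optimization. A clean way to dispatch this is to note that the optimal $\lambda$ on any interval \emph{between} consecutive knots, subject to fixed endpoint values and the Lipschitz bound, that minimizes $\int\lambda$ is exactly the linear segment (the chord lies below any Lipschitz curve with the same endpoints only under convexity, so one must instead appeal to the fact that the objective is concave in the finite-dimensional knot-value vector and that the supremum over $K_\lambda$ with fixed knot values is attained at the spline). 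I would therefore frame the whole argument as: reduce to the finite-dimensional concave program over knot values via the spline representation, invoke the earlier existence result to guarantee the supremum is finite and attained, and conclude that the maximizer, being determined by its knot values and realized by the interpolating spline, is a linear spline.
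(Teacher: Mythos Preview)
Your proposal has a genuine gap at the step you yourself flag as ``delicate'': the linear \emph{chord} interpolant $\tilde\lambda$ through the values $\lambda^*(t_{(j)})$ does \emph{not} in general satisfy $\int_0^{Y_i}\tilde\lambda\leq\int_0^{Y_i}\lambda^*$. If $\lambda^*$ is, say, strictly convex on an interval $[t_{(j)},t_{(j+1)}]$, the chord lies \emph{above} $\lambda^*$ there and the integral inequality goes the wrong way. Your attempted fallback (``concave in the knot-value vector, supremum attained at the spline'') does not rescue this, because fixing the knot values and asking which admissible $\lambda$ minimizes $\int_0^{Y_i}\lambda$ does \emph{not} yield the chord; the chord is neither the minimizer nor the maximizer of the area among $L$-Lipschitz functions with prescribed endpoints. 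A secondary issue is that your knot set omits the censored $Y_i$, yet the integral penalty is evaluated at every $Y_i$; with chord interpolation this would create further problems at censored times lying strictly inside an interval.

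The paper's construction avoids all of this by replacing $\lambda^*$ not with the chord but with the pointwise \emph{lower Lipschitz envelope}: on each interval $[Y_{i_k},Y_{i_{k+1}}]$ (using all ordered observation times as knots) it draws the line of slope $-L$ down from the left endpoint value and the line of slope $+L$ up to the right endpoint value, takes their maximum, and truncates at $0$. This ``V-shape'' $\bar\lambda_n$ is still piecewise linear, still lies in $K_\lambda$, matches $\lambda^*$ at every $Y_i$, and---crucially---satisfies $\bar\lambda_n\leq\lambda^*$ \emph{pointwise} by the Lipschitz bound. Pointwise domination immediately gives $\int_0^{Y_i}\bar\lambda_n\leq\int_0^{Y_i}\lambda^*$ for every $i$ (censored or not), so $Q_n^{cor}(\bar\lambda_n,\beta)\geq Q_n^{cor}(\lambda^*,\beta)$ with the log-terms unchanged, forcing $\lambda^*=\bar\lambda_n$. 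The missing idea in your approach is precisely this lower-envelope construction with extremal slopes $\pm L$, in place of secant-slope interpolation.
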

\begin{proof}
Let $(Y_{i_1},...,Y_{i_n})$ be a variational series of $Y_1,.., Y_n$. Fix $\beta\in \Theta_\beta$. Suppose that we are given $\hat{\lambda}^{(1)}_n\in \Theta_\lambda$ that maximizes $Q_n^{cor}(\cdot,\beta)$. 
Together with $(\hat{\lambda}^{(1)}_n, \beta)$ consider $(\bar{\lambda}_n, \beta)$, where $\bar{\lambda}_n$ is the following function. We set
$\bar{\lambda}_n(Y_{i_k})=\hat{\lambda}^{(1)}_n(Y_{i_k})$, $k=1,...,n$. At each interval $[Y_{i_k},Y_{i_{k+1}}]$, $k=1,...,n-1$,
draw straight lines 
$$L^1_{i_{k}}(t)=\hat{\lambda}^{(1)}_n(Y_{i_k})+L(Y_{i_k}-t),$$
$$L^2_{i_{k}}(t)=\hat{\lambda}^{(1)}_n(Y_{i_{k+1}})+L(t-Y_{i_{k+1}}),$$
with $L$  defined in (i). Denote by $B_{i_k}$ the intersection of $L^1_{i_{k}}(t)$ and $L^2_{i_{k}}(t)$. Additionally, $B_{i_0}:=0$, $B_{i_{n}}:=\tau$, $Y_{i_0}:=0$, $Y_{i_{n+1}}:=\tau$. Finally, define the function $~~\bar{\lambda}_n(t)$ as
\begin{equation}\label{spl}
    \bar{\lambda}_n(t)=\left\{ \begin{array}{rcl}  L^2_{i_0}(t)& \mbox{if} & t \in [0,Y_{i_1}],\\
    \max\{L^1_{i_{k}}(t), 0\}& \mbox{if} & t\in [Y_{i_k}, B_{i_k}], \;k=1,...,n-1,\\
 	\max\{L^2_{i_{k}}(t), 0\}& \mbox{if} & t\in [B_{i_k}, Y_{i_{k+1}}],\; k=1,...,n-1, \\
    L^1_{i_n}(t)& \mbox{if} & t \in [Y_{i_n},\tau].\\
                \end{array}\right.
\end{equation}
It is easily seen that $\bar{\lambda}_n \in \Theta_\lambda$. By construction, $\hat{\lambda}^{(1)}_n \geq \bar{\lambda}_n$. Thus,
\begin{equation*}
    Q_n^{cor}(\hat{\lambda}^{(1)}_n, \beta)\leq Q_n^{cor}(\bar{\lambda}_n, \beta).
\end{equation*}
This yields $\hat{\lambda}^{(1)}_n = \bar{\lambda}_n$ and completes the proof.\\
\end{proof}

Notice that \textit{eventually} $\bar{\lambda}_n(B_{i_k})>0,$ and then one can omit  maximum in (\ref{spl}).

As soon as we constructed a linear spline
$$\bar{\lambda}_n(\beta)=\arg \max_{\lambda:(\lambda,\beta)\in\Theta} Q_n^{cor},$$ we maximize $Q(\beta):=Q_n^{cor}(\bar{\lambda}_n(\beta),\beta)$ in $\beta\in \Theta_\beta$, i.e., we search for a $\hat{\beta}\in \Theta_\beta$ such that
\begin{equation*}
    Q(\hat{\beta})\geq \sup_{\beta\in\Theta_\beta} Q(\beta)-\varepsilon_n.
\end{equation*}
Since $Q(\beta)$ is bounded, $\hat{\beta}$ does exist.

We have
\begin{equation*}\label{}
    Q_n^{cor}(\bar{\lambda}_n(\hat{\beta}),\hat{\beta})\geq \sup_{\beta\in\Theta_\beta}\max_{\lambda\in\Theta_\lambda} Q(\beta)-\varepsilon_n=\sup_{ (\lambda,\beta)\in\Theta} Q_n^{cor}(\lambda,\beta)-\varepsilon_n.
\end{equation*}
Therefore, the estimator $(\bar{\lambda}_n(\hat{\beta}),\hat{\beta})$ satisfies Definition \ref{def1} and its evaluation is just a parametric problem.

\section{Construction of asymptotically normal estimator on the second stage}
In this section, we modify the estimator $(\hat{\lambda}^{(1)}_n(\omega),\hat{\beta}^{(1)}_n(\omega))$ from Definition 1 in order to produce the asymptotically normal estimator.
\begin{ozn} The modified corrected estimator $(\hat{\lambda}^{(2)}_n,\hat{\beta}^{(2)}_n)$ of $(\lambda,\beta)$ is a Borel measurable function of observations $(Y_i,\Delta_i,W_i)$, $i=1,...,n$, with values in $K$ and such that
\begin{equation*}\label{def3}
    (\hat{\lambda}^{(2)}_n,\hat{\beta}^{(2)}_n)=
    \begin{cases} \arg\max\{~Q_n^{cor}(\lambda,\beta)|\ (\lambda,\beta)\in K,\;\mu_\lambda\geq\frac{1}{2}\mu_{\hat{\lambda}^{(1)}_n}~\},
    &\mbox{if~}\; \mu_{\hat{\lambda}^{(1)}_n}>0,\\
    (\hat{\lambda}^{(1)}_n,\hat{\beta}^{(1)}_n),&  \ \mbox{otherwise}, 
    \end{cases}
\end{equation*}
with $\mu_\lambda:=\min_{t\in[0,\tau]} \lambda(t)$.
\end{ozn}

 Such estimator exists due to results of  Pfanzagl \cite{Pfanzagl}. Notice that by Theorem \ref{ther1} ${\mu_{\hat{\lambda}^{(1)}_n}\to \mu_{\lambda_0}>0}$ a.s., and \textit{eventually} it holds
$$K_1:=\{(\lambda,\beta)\in K|\mu_\lambda\geq\frac{3}{4}\mu_{\lambda_0}\}\subset\{(\lambda,\beta)\in K|\mu_\lambda\geq\frac{1}{2}\mu_{\hat{\lambda}^{(1)}_n}\} \subset$$
$$\subset\{(\lambda,\beta)\in K|\mu_\lambda\geq\frac{1}{4}\mu_{\lambda_0}\}=:K_2.$$
The estimator
\begin{equation}
    (\hat{\lambda}^{(3)}_n,\hat{\beta}^{(3)}_n)= \arg\max_{(\lambda,\beta)\in K_2} Q_n^{cor}(\lambda,\beta)
\end{equation}
is strongly consistent under the conditions (i) -- (vii), because according to Theorem \ref{ther1} \textit{eventually} it can be taken as an estimator $(\hat{\lambda}^{(1)}_n,\hat{\beta}^{(1)}_n)$. Therefore ${(\hat{\lambda}^{(3)}_n,\hat{\beta}^{(3)}_n)\in K_1}$ \textit{eventually}, and
\textit{eventually} $(\hat{\lambda}^{(3)}_n,\hat{\beta}^{(3)}_n)$ can be taken as an estimator $(\hat{\lambda}^{(2)}_n,\hat{\beta}^{(2)}_n)$. This implies the strong consistency of $(\hat{\lambda}^{(2)}_n,\hat{\beta}^{(2)}_n)$.

Introduce additional assumptions, under which the estimator $(\hat{\lambda}^{(2)}_n,\hat{\beta}^{(2)}_n)$ is asympto\-tically normal.
\begin{enumerate}
\item[(viii)] $\beta_0$ is an interior point of $\Theta_\beta$.
\item[(ix)] $\lambda_0\in \Theta_\lambda^\epsilon$ for some $\epsilon>0$, where\\
$$\Theta_\lambda^\epsilon:=\{~ f:[0,\tau]\to \mathbb{R} |\; f(t)\geq \epsilon,\ \forall t \in [0,\tau],$$
$$\ |f(t)-f(s)|\leq (L-\epsilon)|t-s|,\forall t,s\in [0,\tau] ~\}.$$
\item[(x)] $\mathsf{P}(C>0) =1$.
\item[(xi)] $ \mathsf{E} U=0$ and for some $\epsilon >0,$
\begin{equation*}
 \mathsf{E} e^{2D\| U \|}<\infty ,  \ D:=\max_{\beta \in \Theta_\beta}\| \beta \| +\epsilon.
\end{equation*}
\item[(xii)] $ \mathsf{E} e^{2D \| X \|}< \infty$, with $D$  defined in (xi).
\end{enumerate}
Further, we use  notations from \cite{ChiKu}. Let
$$a(t)=\mathsf{E}[Xe^{\beta_0^T X}G_T(t|X)], \quad b(t)=\mathsf{E}[e^{\beta_0^T X}G_T(t|X)],$$
$$p(t)=\mathsf{E}[XX^Te^{\beta_0^T X}G_T(t|X)], \quad
T(t)=p(t)b(t)-a(t)a^T(t), \quad K(t)=\frac{\lambda_0(t)}{b(t)},$$
$$A=\mathsf{E}\left[XX^T e^{\beta_0^T X} \int_0^Y \lambda_0 (u) du\right], \quad M=\int_0^{\tau} T(u) K(u)G_c(u)du.$$

For $i=1,2,\ldots,$ introduce random variables
$$\zeta_i=-\frac{\Delta_i a(Y_i)}{b(Y_i)}+\frac{\exp(\beta_0^TW_i)}{M_U(\beta_0)}\int_0^{Y_i}a(u)K(u)du+\frac{\partial q}{\partial \beta}(Y_i,\Delta_i,W_i,\beta_0,\lambda_0),$$
with
$$\frac{\partial q}{\partial \beta}(Y,\Delta,W;\lambda,\beta)=\Delta \cdot W-\frac{M_U(\beta)W-E(U e^{\beta^TU})}{M_U(\beta)^2}
\exp(\beta^TW)\int_0^Y \lambda(u)du.$$
Let $$\Sigma_{\beta}=4\cdot \mathsf{Cov}(\zeta_1),~~  m(\varphi_\lambda)=\int_0^{\tau}\varphi_\lambda(u)a(u)G_C(u)du,$$
$$\sigma^2_{\varphi}=4\cdot \mathsf{Var}~ \langle q'(Y,\Delta,W,\lambda_0,\beta_0),\varphi\rangle=$$
$$=4\cdot Var\left[ \frac{\Delta \cdot \varphi_\lambda(Y)}{\lambda_0(Y)}-\frac{\exp(\beta_0^T W)}{M_U(\beta_0)}\int_0^Y \varphi_\lambda(u)du
+\Delta \cdot \varphi_{\beta}^T  W+\right .$$
$$\left.+ \varphi_{\beta}^T \frac{M_U(\beta_0)W- E(U e^{\beta^T_0U})}{M_U(\beta_0)^2}
\exp(\beta_0^T W)\int_0^Y \lambda_0(u)du\right]
$$
with $\varphi=(\varphi_{\lambda},\varphi_{\beta})\in C[0,\tau]\times R^m$,
where $q'$ denotes the Fr\'echet derivative.

Now, we can apply Theorem 1 from \cite{ChiKu} to state the asymptotic normality for $\hat{\beta}^{(2)}_n$ and $\hat{\lambda}^{(2)}_n$
(it follows from the asymptotic normality of consistent estimators $\hat{\beta}^{(3)}_n$ and $\hat{\lambda}^{(3)}_n$).

\begin{thm} Assume conditions (i), (ii), (v) -- (xii). Then $M$ is nonsingular and
$$\sqrt{n}(\hat{\beta}^{(2)}_n-\beta_0)\xrightarrow{\text{d}}N_m(0,M^{-1}\sigma_{\beta}M^{-1}).$$
Moreover, for any Lipschitz continuous function $f$ on $[0,\tau]$,
\begin{equation*}
\sqrt{n}\int_0^\tau(\hat{\lambda}^{(2)}_n-\lambda_0)(u)f(u)G_C(u) du\xrightarrow{\text{d}}N(0,\sigma^2_{\varphi}(f)),
\end{equation*}
where $\sigma^2_{\varphi}(f)=\sigma^2_{\varphi}$ with $\varphi=(\varphi_{\lambda},\varphi_{\beta})$, $\varphi_{\beta}=-A^{-1}m(\varphi_{\lambda})$, and $\varphi_\lambda$ is a unique solution to the Fredholm's integral equation
\begin{equation*}
\frac{\varphi_\lambda}{K(u)}-a^T(u)A^{-1}m(\varphi_{\lambda})=f(u).
\end{equation*}
\end{thm}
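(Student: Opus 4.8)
The plan is to deduce the statement from Theorem 1 of \cite{ChiKu}, which establishes exactly this type of asymptotic normality for the corrected estimator over a parameter set that is bounded and bounded away from zero. The whole difficulty is that the first-stage estimator lives on the set $K$ of (\ref{K}), on which $\lambda$ may vanish and may be unbounded from above; these two features are incompatible with a $\sqrt{n}$-expansion of the score, since $\log\lambda$ is non-smooth near $0$. The second-stage modification was introduced precisely to cure this: I would argue that $(\hat{\lambda}^{(2)}_n,\hat{\beta}^{(2)}_n)$ coincides, \emph{eventually} and almost surely, with the constrained maximizer $(\hat{\lambda}^{(3)}_n,\hat{\beta}^{(3)}_n)$ over the compact, strictly-positive set $K_2$, and then transfer the distributional limit through this almost-sure coincidence.

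First I would record the reduction, which is already prepared in the text preceding the statement. By Theorem \ref{ther1} the first-stage estimator is strongly consistent, so $\mu_{\hat{\lambda}^{(1)}_n}\to\mu_{\lambda_0}>0$ a.s.; hence \emph{eventually} the data-dependent constraint $\{\mu_\lambda\geq\frac{1}{2}\mu_{\hat{\lambda}^{(1)}_n}\}$ is sandwiched between the fixed sets $K_1$ and $K_2$. Since $(\hat{\lambda}^{(3)}_n,\hat{\beta}^{(3)}_n)$ maximizes $Q_n^{cor}$ over $K_2$ and is itself strongly consistent, it \emph{eventually} falls into $K_1$, hence into the intermediate constraint set, hence is also its maximizer; therefore $(\hat{\lambda}^{(2)}_n,\hat{\beta}^{(2)}_n)=(\hat{\lambda}^{(3)}_n,\hat{\beta}^{(3)}_n)$ \emph{eventually}. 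Because two random sequences that coincide for all large $n$ on a set of full probability share every distributional limit, it suffices to prove the asymptotic normality for $(\hat{\lambda}^{(3)}_n,\hat{\beta}^{(3)}_n)$.

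Next I would check that $(\hat{\lambda}^{(3)}_n,\hat{\beta}^{(3)}_n)$ falls under the hypotheses of \cite{ChiKu}. The set $K_2$ is strictly separated from zero, $\mu_\lambda\geq\frac{1}{4}\mu_{\lambda_0}>0$; it is only unbounded from above, but the localization argument of Part 1 of the proof of Theorem \ref{ther1} applies verbatim to the maximization over $K_2$, so \emph{eventually} the maximizer lies in the compact set $K_2\cap K^R$ for a large nonrandom $R$, on which $\lambda$ is bounded above and below. On such a set the objective is exactly the bounded, bounded-away-from-zero problem treated in \cite{ChiKu}. I would then verify its remaining hypotheses: the moment conditions (xi)--(xii), which strengthen (iii)--(iv), guarantee finiteness of the second moments of the score, hence of the asymptotic covariances $\Sigma_\beta$ and $\sigma^2_\varphi$; condition (viii) makes $\beta_0$ an interior point of $\Theta_\beta$; condition (ix) makes $\lambda_0$ an interior point of $K_\lambda$ relative both to the positivity constraint ($\lambda_0\geq\epsilon$) and to the Lipschitz constraint (slope $\leq L-\epsilon$), so that the Fr\'echet derivative of $Q_n^{cor}$ vanishes at the maximizer; condition (x) together with (vi) guarantees nondegeneracy of the limiting covariance operator and the invertibility of $M$ and of the integral operator defining the Fredholm equation.

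Granting these verifications, Theorem 1 of \cite{ChiKu} applies and yields the nonsingularity of $M$, the limit $\sqrt{n}(\hat{\beta}^{(3)}_n-\beta_0)\xrightarrow{\text{d}}N_m(0,M^{-1}\sigma_{\beta}M^{-1})$, and, for every Lipschitz $f$, the scalar limit for the weighted linear functional of $\hat{\lambda}^{(3)}_n-\lambda_0$ with the stated variance $\sigma^2_\varphi(f)$ determined through $\varphi_\beta=-A^{-1}m(\varphi_\lambda)$ and the Fredholm equation. The eventual coincidence of the second paragraph transfers both limits verbatim to $(\hat{\lambda}^{(2)}_n,\hat{\beta}^{(2)}_n)$. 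The main obstacle is neither the reduction nor the moment bookkeeping but the interior-point verification: the entire two-stage construction exists to ensure that the effective parameter set is separated from zero and that $\lambda_0$ lies strictly inside the Lipschitz and positivity constraints, since only then is the score expandable and the limiting information operator $M$ invertible --- this is where conditions (viii)--(x) are indispensable.
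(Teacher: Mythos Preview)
Your proposal is correct and follows essentially the same route as the paper: reduce to the auxiliary estimator $(\hat{\lambda}^{(3)}_n,\hat{\beta}^{(3)}_n)$ via the \emph{eventual} coincidence argument, then invoke Theorem~1 of \cite{ChiKu} and transfer the limits back. If anything, you supply more detail than the paper, which simply states that the asymptotic normality of $(\hat{\lambda}^{(2)}_n,\hat{\beta}^{(2)}_n)$ ``follows from the asymptotic normality of consistent estimators $\hat{\beta}^{(3)}_n$ and $\hat{\lambda}^{(3)}_n$'' without spelling out the localization to $K_2\cap K^R$ or the verification of the remaining hypotheses.
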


 For computation of estimator $(\hat{\lambda}^{(2)}_n,\hat{\beta}^{(2)}_n)$ we refer to \cite{ChiKu}.

\section{Conclusion}
 Under quite mild assumptions, we construct  the estimator for the function $\lambda(\cdot)$ and parameter $\beta$ in Cox proportional hazards model with measurement errors. Contrary to Kukush et al. (2011) and Chimisov and Kukush (2014), we consider an unbounded parameter set. The obtained estimator is consistent and can be modified to be asymptotically normal. Also, we  describe a numerical scheme for calculation of the estimator.
In future we intend to construct confidence regions based on the estimator.



\begin{thebibliography}{1}
\bibitem{Andersen}Andersen, P. K.,   and Gill, R. D. (1982). {Cox's regression model for counting processes: a large sample study}. { The Annals of Statistics} \textbf{10}, 4, 1100-1120.
\bibitem{Augustin} Augustin, T. (2004). An exact corrected log-likelihood function for Cox's proportional hazards model under measurement error and some extensions. Scandinavian Journal of Statistics \textbf{31}, 1, 43-50.
\bibitem{ChiKu} Chimisov, C., and Kukush, A. (2014). Asymptotic normality of corrected estimator in Cox proportional hazards model with measurement error. Modern Stochastics: Theory and Applications \textbf{1}, 1, 13-32.
\bibitem{Cox} Cox, D.R. (1972). Regression models and life tables. Journal of the Royal Statistical Society, Series B \textbf{34}, 187-220.

\bibitem{GuKong} Gu, M., and Kong, F.  H. (1999). Consistent estimation in Cox proportional hazards model with covariate measurement errors.  {  Statistica Sinica} \textbf{32}, 9,  953-969.
\bibitem{KuBar} Kukush, A., Baran, S., Fazekas, I., and Usoltseva, E. (2011). Simultaneous estimation of baseline hazard rate and regression parameters in Cox proportional hazards model with measurement error. Journal of Statistical Research \textbf{45}, 2, 77-94.
\bibitem{Pfanzagl} Pfanzagl G. (1969). On the measurability and consistency of minimum contrast estimates. Metrika \textbf{14}, 249-273.
\bibitem{Royston} Royston, P. (2011). {Estimating a smooth baseline hazard function for the Cox model}. {Research report N 314, Department of Statistical Science, University College London.}
\end{thebibliography}
\end{document}